\newtheorem{thm}{Theorem}
\newtheorem{main theorem}[thm]{Main Theorem}
\newtheorem{lemma}[thm]{Lemma}
\newtheorem{prop}[thm]{Proposition}
\newtheorem*{prop*}{Proposition}
\theoremstyle{definition}
\newtheorem{defn}[thm]{Definition}
\newtheorem{remark}[thm]{Remark}
\newtheorem{example}[thm]{Example}
\newcommand{\bea}{\begin{eqnarray*}}
\newcommand{\eea}{\end{eqnarray*}}
\newcommand{\be}{\begin{equation}}
\newcommand{\ee}{\end{equation}}
\newcommand{\C}{\mathbb{C}}
\newcommand{\N}{\mathbb{N}}
\let\phe=\varphi
\title[Elliptic polynomial skew products]{Fatou components of elliptic polynomial skew products}
\author[Han Peters, Jasmin Raissy]{Han Peters, Jasmin Raissy}
\thanks{The work of  Jasmin Raissy was partially supported by ANR
 project LAMBDA,  ANR-13-BS01-0002, by the FIRB2012 grant ``Differential Geometry and Geometric Function Theory'', RBFR12W1AQ 002 and CNRS. }
\begin{document}
\bibliographystyle{plain}

\begin{abstract}
We investigate the description of Fatou components for polynomial skew-products in two complex variables. The non-existence of wandering domains near a super-attracting invariant fiber was shown in \cite{Lilov}, and the geometrically-attracting case was studied in \cite{PV} and \cite{PS}. In \cite{ABDPR} it was proven that wandering domains can exist near a parabolic invariant fiber. In this paper we study the remaining case, namely the dynamics near an elliptic invariant fiber. We prove that the two-dimensional Fatou components near the elliptic invariant fiber correspond exactly to the Fatou components of the restriction to the fiber, under the assumption that the multiplier at the elliptic invariant fiber satisfies the Brjuno condition and that the restriction polynomial has no critical points on the Julia set. We also show the description does not hold when the Brjuno condition is dropped. Our main tool is the construction of expanding metrics on nearby fibers, and one of the key steps in this construction is given by a local description of the dynamics near a parabolic periodic cycle.
\end{abstract}

\maketitle

\tableofcontents

\section{Introduction}

Recently there have been several results regarding the description of Fatou components for polynomial skew products, i.e. maps of the form
$$
F(z,w) = (f(z), g(z,w)),
$$
with $f$ and $g$ polynomials. In \cite{ABDPR} the case of the dynamics near a parabolic fixed point of $f$ was considered, and it was shown that wandering Fatou components can occur for such maps. These were the first, and currently the only, examples of wandering Fatou components for polynomial maps in several complex variables.

Earlier Lilov, in \cite{Lilov}, considered the dynamics near a super-attracting fixed point of $f$. Lilov showed that in the super-attracting basin of $f$, the only Fatou components of $F$ are the \emph{bulging} Fatou components, the two-dimensional Fatou components corresponding to the one-dimensional Fatou components of the restriction to the invariant super-attracting fiber. The attracting but not super-attracting case was studied in \cite{PV} and \cite{PS}. While the attracting case is in general still open, under additional assumptions the same characterization was obtained: near the invariant Fiber the only Fatou components are the bulging components.

Here we study the remaining case, namely the dynamics in a Siegel disk of $f$. We note that examples of Fatou components in this setting were studied previously in \cite{BFP}, where invariant Fatou components with punctured limit sets were constructed.

Let us be more precise about our setting. We suppose that $f(0) = 0$ and $f^\prime(0) = \lambda$, with $\lambda = e^{2 \pi i\theta}$ and $\theta \in \mathbb R \setminus \mathbb Q$. We assume that $f$ is linearizable near $z = 0$, so that after a local change of coordinates we may assume that $F$ is of the form
$$
F(z,w) = (\lambda \cdot z, g_z(w)),
$$
where $g$ is a polynomial in $w$ with coefficients depending holomorphically on $z$. We assume that the degree of the polynomial $g_z$ is constant near $z = 0$, and at least $2$. Our main result is the following.

\begin{thm}\label{thm:main}
If $\lambda$ is Brjuno and all critical points of the polynomial $g_0$ lie in basins of attracting or parabolic cycles, then all Fatou components of $g_0$ bulge, and there is a neighborhood of the invariant fiber $\{z=0\}$ in which the only Fatou components of $F$ are the bulging Fatou components of $g_0$. In particular there are no wandering Fatou components in this neighborhood.
\end{thm}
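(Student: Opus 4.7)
My plan is to handle the two assertions separately: first, that each Fatou component of $g_0$ bulges into a genuine 2D Fatou component of $F$; and second, the more delicate fact that these are the only Fatou components of $F$ in a neighborhood of $\{z=0\}$.

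\textbf{Bulging.} Under the critical-orbit hypothesis, every Fatou component of $g_0$ is pre-periodic to an attracting or parabolic cycle (no Siegel disks, Cremer points, Herman rings, or wandering domains appear in one variable). Fix a periodic cycle $C = \{q_1, \ldots, q_k\}$ of $g_0$ in the Fatou set; then $\{0\} \times C$ is a cycle of $F$ whose derivative has eigenvalues $\lambda$ and the fiberwise multiplier $\mu$ of $g_0^k$ along $C$. In the attracting case $|\mu| < 1$ I would combine a strong stable manifold argument (giving a curve $z \mapsto p(z)$ of persistent attracting fixed points over the Siegel disk of $f$) with the Brjuno linearization in $z$ to produce a 2D Fatou component fibered over that Siegel disk, meeting $\{0\}$ in the one-dimensional basin. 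In the parabolic case $\mu^N = 1$ I would apply fiberwise Fatou-coordinate techniques in $w$, patched over nearby fibers using the fact that $f$ acts as a rigid rotation on each circle $|z| = r$.

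\textbf{Absence of non-bulging components via expanding metrics.} Let $U$ be a Fatou component of $F$ with forward orbit contained in a small neighborhood $\Omega$ of $\{z = 0\}$. The principal tool is a conformal orbifold-type metric $\rho_z$ on each fiber $\{z\} \cap \Omega$, defined on a neighborhood of $J(g_0)$ outside small forward-invariant neighborhoods of the parabolic petals, expanded uniformly by $F$. On $\{z=0\}$ this is the standard semi-hyperbolic orbifold metric available from our hypothesis on the critical orbits of $g_0$. For nearby fibers I would transplant it using that $|z|$ is preserved by $F$ and that $g_z$ is uniformly close to $g_0$ on compact sets. If $U$ did not bulge, then $\pi_w(F^n(U))$ would not eventually lie in a single Fatou component of $g_0$, so orbits in $U$ would cluster on $J(g_0)$; at cluster points in the semi-hyperbolic region, expansion of $\rho_z$ directly contradicts equicontinuity of $\{F^n|_U\}$, while in the petal region I would use a precise local normal form to show that any orbit not converging to the parabolic cycle escapes the petal in controlled time, returning the argument to the expanding regime.

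\textbf{Main obstacle.} The hardest step is the parabolic analysis. The fiberwise perturbation of a parabolic fixed point of $g_0$ can be attracting, parabolic, or elliptic depending on $z$, with multiplier $\mu(z)$ a small perturbation of a root of unity, so one needs a local model describing $F$ uniformly near $\{0\} \times C$ that is compatible with Siegel rotation in $z$. It is precisely here that the Brjuno condition on $\lambda$ enters, ruling out resonances between the base rotation and the parabolic multiplier; this is the ``local description of the dynamics near a parabolic periodic cycle'' flagged in the abstract, and I expect it to carry the bulk of the technical weight of the proof.
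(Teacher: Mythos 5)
Your overall architecture matches the paper's: (i) reduce bulging to the periodic components, where attracting basins persist for standard reasons and the parabolic case is the real issue; (ii) rule out all other components via a continuously varying family of fiberwise expanding conformal metrics, with a separate flat metric in normalized coordinates near the parabolic points; and you correctly locate the role of the Brjuno condition in the parabolic local model. One minor difference of flavor: under the stated hypothesis no orbifold metric is needed --- the paper takes the hyperbolic metric of the complement of $p^{-N}(D_0)$, where $D_0$ is a forward-invariant set containing the attracting fixed points and the attracting petals; this complement is backward invariant and misses the critical set, and the same construction is run verbatim in each nearby fiber.

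The one genuine gap sits exactly at the step you flag as the main obstacle, and the mechanism you propose there would not work as stated. ``Fiberwise Fatou coordinates patched over the circles $|z|=r$'' cannot get off the ground: the fibers $\{z=z_0\}$ with $z_0\neq 0$ are not invariant and have no finite return time, so there is no fiberwise parabolic germ whose Fatou coordinate one could take. What the paper actually does (Proposition \ref{localcoordinates}) is solve the homological equation $g_z(\varphi(z))=\varphi(\lambda z)$ for an invariant holomorphic curve $\{w=\varphi(z)\}$ through the parabolic point over the whole Siegel disk; the small divisors are $\lambda^p-1$ (the base rotation against the \emph{trivial} fiber multiplier, not against the parabolic multiplier), and the Brjuno condition is used to sum them exactly as in Brjuno's linearization theorem. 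Further changes of variables, with the same divisors, freeze the low-order jet so that near each parabolic point $F$ takes the form $(z,w)\mapsto\bigl(\lambda z,\ w-w^{k+1}+bw^{2k+1}+\sum_{m\ge 2k+2}\beta_m(z)w^m\bigr)$ uniformly in $z$; this uniform normal form is what makes both the bulging of the parabolic basins and the expansion on the repelling petals go through. That this step cannot be replaced by soft continuity or patching arguments is shown in Section \ref{section:nonBrjuno}: for Cremer $\lambda$ the invariant curve can fail to exist, and then the parabolic basins do not bulge, so the conclusion of the theorem genuinely hinges on producing this curve.
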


Recall that a complex number is called \emph{Brjuno} (see \cite{Brjuno1} for more details) if
\begin{equation}\label{eq:brjuno}
\sum_{k=0}^{+\infty}{\frac{1}{2^k}}\log{\frac{1}{\omega(2^{k+1})}}<+\infty\;,
\end{equation}
where $\omega(m) = \min_{2\le k\le m} |\lambda^k - \lambda|$ for any $m\ge 2$, and a quadratic polynomial $\lambda z + z^2$ is linearizable near $0$ if and only if $\lambda$ is Brjuno \cite{Brjuno1} \cite{Brjuno2} \cite{Yoccoz}. For higher degree polynomials the Brjuno condition \eqref{eq:brjuno} is sufficient, but necessity is still an open question. Similarly, we do not know whether the Brjuno condition is necessary for Theorem \ref{thm:main}, but we will show that it cannot be completely omitted.

By Sullivan's No Wandering Domains Theorem \cite{Sullivan1985} it is known that all Fatou components of a one-dimensional polynomial are periodic or pre-periodic. It is clear that the techniques used by Sullivan, which involve quasi-conformal deformations, cannot be extended to higher dimensions, not even to polynomial skew-products. There are no known proofs of Sullivan's Theorem that do not use quasi-conformal deformations. Under additional assumptions on the polynomial one can however prove the non-existence of wandering Fatou components without using quasi-conformal deformations.

If every critical point of the polynomial either (1) lies in the basin of an attracting periodic cycle, (2) lies in the basin of a parabolic periodic cycle, or (3) lies in the Julia set and after finitely many iterates is mapped to a periodic point, then one can construct a conformal metric $\mu$, defined in a backward invariant neighborhood of the Julia set minus the parabolic periodic orbits, so that $g_0$ is expansive with respect to this metric. It follows that there can be no wandering Fatou components.

The main idea in the proof of Theorem \ref{thm:main} is that for fibers $\{z = z_0\}$ sufficiently close to the invariant fiber $\{z = 0\}$ we can define conformal metrics $\mu_{z_0}$ that depend continuously on $z_0$, and so that $F$ acts expansively with respect to this family of metrics. That is, for a point $(z_0,w_0) \in \mathbb C^2$ lying in the region where the metrics are defined, and for a non-zero vertical tangent vector $\xi \in T_{w_0}(\mathbb C_{z_0})$ we have that
$$
\mu_{z_1} (w_1, dg_{z_0} \xi) > \mu_{z_0}(w_0, \xi),
$$
where $(z_1,w_1) = F(z_0, w_0)$.

We note that here we only allow critical points of the type (1) and (2). Whether the same techniques could be used to deal with pre-periodic critical points lying on the Julia set is unclear to us. The difficulty is that the property that critical points are eventually mapped onto periodic cycles is not preserved in nearby fibers. Considering similar difficulties that were overcome in \cite{PS} it is possible that expanding metrics could still be constructed on some dense set of nearby fibers, which would be sufficient for the description of Fatou components.

A local description of the dynamics near the parabolic periodic points is essential in the construction of the expanding metrics on nearby fibers. Without loss of generality we may assume that the skew-product
$$
F(z,w) = (\lambda \cdot z, f_z(w))
$$
satisfies $f_0(w) = w + w^{k+1} + O(w^{k+2})$ for some $k \ge 1$. We will prove the following.

\begin{prop}\label{localcoordinates}
Let $F$ be a holomorphic skew-product of the form
\begin{equation}
F(z,w) = (f(z),g_z(w))
\end{equation}
with $f(z) = \lambda  z + O(z^2)$, $g_0(0)=0$, and $g_0'(0)=1$. Assume $\lambda$ is a Brjuno number. If $g_0(w)\equiv w$, then $F$ is holomorphically linearizable. If $g_0(w) = w + g_{0,k+1} w^{k+1} + O(w^{k+2})$ with $g_{0,k+1}\ne0$ for some $k \ge 1$, then for any $h\ge 0$ there exists a local holomorphic change of coordinates near the origin conjugating $F$ to a map of the form $\widetilde{F}(z,w) = (\lambda z, \tilde{g}(z,w))$ satisfying
\begin{equation}
\tilde{g}(z,w) = w + g_{0,k+1} w^{k+1} + \cdots + g_{0,k+h+1} w^{k+h+1} + \sum_{j\ge h}w^{k+j+2}\alpha_{k+j+2}(z),
\end{equation}
where, for $j\ge h$, $\alpha_{k+j+2}(z)$ is a holomorphic function in $z$ such that $\alpha_{k+j+2}(0) = g_{0,k+j+2}$.
\end{prop}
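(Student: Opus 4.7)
The plan is first to reduce to the case $f(z)=\lambda z$ by applying the one-dimensional Brjuno linearization theorem to $f$ and making a change of base coordinates, and then to seek a fibre-preserving conjugation $\Phi(z,w)=(z,\,w+\psi(z,w))$ with $\psi(0,w)\equiv 0$; the latter condition guarantees that the conjugated fibre map at $z=0$ is unchanged, i.e.\ equal to $g_0$, which matches the required $\tilde g(0,w)=g_0(w)$. Writing $\psi(z,w)=\sum_{n\ge 1}z^n\psi_n(w)$ and expanding the conjugation equation
\[
g_z(w)+\psi(\lambda z,\,g_z(w))=\tilde g(z,\,w+\psi(z,w))
\]
in powers of $z$, at each order $n\ge 1$ one obtains a cohomological equation
\[
L_n\psi_n(w)=\tilde g_n(w)-g_n(w)+R_n(w),
\]
where $L_n\psi(w):=\lambda^n\psi(g_0(w))-g_0'(w)\,\psi(w)$, the symbols $g_n$ and $\tilde g_n$ denote the $z^n$-coefficients of $g$ and of the target $\tilde g$, and $R_n$ is an explicit polynomial expression in $\psi_1,\dots,\psi_{n-1}$ and in the already-determined lower-order components of $\tilde g$. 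Since $g_0'(0)=1$, expressing $L_n$ in the basis $\{w^j\}_{j\ge 0}$ gives a lower triangular operator with constant diagonal entries $\lambda^n-1$, and hence formally invertible with the small divisor $1/(\lambda^n-1)$ entering at every step of the inversion.

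For Case~1, where $g_0(w)\equiv w$, one takes $\tilde g(z,w)\equiv w$, so that $\tilde g_n\equiv 0$ and the equation becomes $(\lambda^n-1)\,\psi_n(w)=-g_n(w)+R_n(w)$, solved by a single division at each level for a full power series $\psi_n(w)$. For Case~2, where $g_0\ne\Id$, one exploits the freedom in $\tilde g_n$ — whose $w$-valuation need only be at least $k+h+2$ — to force $\psi_n$ to be a polynomial in $w$ of degree at most $k+h+1$: the coefficients $\psi_{n,0},\dots,\psi_{n,k+h+1}$ are determined by the triangular system demanding that the $w$-coefficients of $\tilde g_n$ of order $\le k+h+1$ vanish, and the remaining coefficients $\alpha_{k+j+2,n}$ of $\tilde g_n$ (for $j\ge h$) are then read off directly from the same equation. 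Note that one does not require $\psi_n(0)=0$: the $w^0$-component of $\psi_n$ is precisely what locates the $F$-invariant graph over $\{z=0\}$ passing through the origin, which is itself a small-divisor problem of Siegel type.

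The main obstacle is proving convergence of the formal series $\sum_n z^n\psi_n(w)$, and hence of the coefficient series $\alpha_{k+j+2}(z)$, on a polydisc about $(0,0)$. The triangular solve at level $n$ contributes a factor of up to $(\lambda^n-1)^{-(k+h+2)}$ in Case~2 (and $(\lambda^n-1)^{-(j+1)}$ at $w$-order $j$ in Case~1); at the same time, the recursive definition of $R_n$ in terms of $\psi_1,\dots,\psi_{n-1}$, when fully unrolled, produces combinatorial sums over labelled trees weighted by products $\prod_i 1/|\lambda^{n_i}-1|$. Controlling such products is exactly what the Brjuno condition $\sum_{k}2^{-k}\log(1/\omega(2^{k+1}))<\infty$ accomplishes, via the standard Brjuno majorant/tree estimate. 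In Case~2 the fact that each $\psi_n$ lives in the finite-dimensional space of polynomials of degree $\le k+h+1$ simplifies the bookkeeping considerably; in Case~1 one must in addition control compositions $\psi_m(g_z(w))$, but the estimate is the classical one. In both cases one concludes that $\Phi$ is holomorphic in a neighborhood of the origin and realises the desired conjugation.
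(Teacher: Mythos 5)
Your proposal is correct in outline but organizes the normalization along the transpose grading from the paper's, and the two routes distribute the difficulty quite differently. The paper grades by $w$-degree: after linearizing $f$, it performs a \emph{finite} sequence of elementary shears $(z,w)\mapsto(z,w+\varphi(z))$, $(z,w)\mapsto(z,w+\psi(z)w)$, $(z,w)\mapsto(z,w+\xi(z)w^{m})$ (Lemmas 3--5 of the paper), each of which reduces to a single scalar cohomological equation in $z$ alone; the only step with a genuinely recursive small-divisor estimate is the invariant curve $\{w=\varphi(z)\}$, whose recursion is arranged to coincide verbatim with Brjuno's, and the later steps are solved by the bare division $\xi_n=\alpha_{m,n}/(\lambda^n-1)$ with no recursion at all. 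You instead grade by $z$-degree, solving for all $w$-coefficients of $\psi_n$ at once through the triangular operator $L_n$; your identification of $L_n(w^j)=(\lambda^n-1)w^j+O(w^{j+k})$ and of the role of $\tilde g_n$ as the complementary unknown of valuation $\ge k+h+2$ is correct, and the $w^0$-component of your $\psi$ is exactly the paper's invariant curve. The price of your packaging is paid at the convergence step: your recursion is \emph{not} literally the classical Brjuno one, since each level $n$ contributes the divisor $(\lambda^n-1)^{-1}$ raised to a power up to $k+h+2$ (from back-substitution in the triangular solve) and the trees carry compositions with $g_0$. This still converges --- bounded multiplicities only multiply the constant in Brjuno's counting lemma --- but it requires redoing the counting rather than citing the standard estimate "exactly", so this is the one place where your sketch understates the work and where the paper's decomposition buys a genuinely cheaper proof. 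Likewise, for the case $g_0\equiv w$ you are in effect reproving the quasi-Brjuno linearization theorem (the eigenvalue pair $(\lambda,1)$ is resonant, so the relevant condition is the partial/quasi-Brjuno one on $\lambda$ alone, not the classical Siegel--Brjuno setting), which the paper simply quotes from Raissy's linearization theorem.
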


In the proof of this proposition we use the assumption that $\lambda$ is Brjuno. We will also construct an Cremer-like example for $\lambda$ non-Brjuno for which the above description is false.

The organization of the paper is as follows. In section 2 we will consider the local dynamics near a parabolic point, and prove Proposition \ref{localcoordinates} above. In section 3 we show that Proposition \ref{localcoordinates} does not hold in the absence of the Brjuno condition. In fact, we prove that parabolic basins do not need to bulge in the non-Brjuno setting. In section 4 we discuss the construction of expanding metrics, and in section 5 we conclude the proof of Theorem \ref{thm:main}.

\section{Local changes of coordinates}

This section is devoted to the proof of Proposition~\ref{localcoordinates}.
We shall use a procedure recalling the Poincar\'e-Dulac normalization process \cite[Chapter 4]{arnold} aiming to conjugate the given polynomial skew-product to a skew-product in a simpler form. At each step of the usual Poincar\'e-Dulac normalization procedure we can use a polynomial change of coordinates to eliminate all non-resonant monomials of a given degree. We shall use a similar idea here, but thanks to the skew-product structure of the germ and the Brjuno assumption we shall be able to eliminate all non-resonant terms of a given degree in the powers of $w$ by means of changes of coordinates that are polynomial in $w$ with holomorphic coefficients in $z$. It turns out that there are some differences in between the first degrees.
We shall divide the proof into three steps to make such differences clearer to the reader.

Let $F\colon(\C^2,O)\to(\C^2,O)$ be a germ of skew-product of the form
\begin{equation}\label{eq_0}
F(z,w) = (\lambda z, a_0(z)+ a_1(z) w + \cdots a_d(z) w^d)
\end{equation}
where $a_0(z), \dots, a_d(z)$ are germs of holomorphic functions. Then $F$ admits a germ of local holomorphic invariant curve of the form $\{w = \phe(z)\}$ if and only if
\begin{equation}\label{eq_phi1}
a_0(z) + a_1(z)\phe(z)+\cdots + \cdots +a_d(z)\phe(z)^d = \phe(\lambda z),
\end{equation}
which is equivalent to the existence of a local holomorphic change of coordinates $\Phi$ of the form
\begin{equation}\label{eq_phi}
\Phi(z,w)
=
(z, w + \phe(z)),
\end{equation}
conjugating $F$ to a germ
\begin{equation}\label{eq_1bis}
\widetilde F(z,w) = (\lambda z, \alpha_1(z) w + \cdots+ \alpha_d(z) w^d).
\end{equation}
Thus $F$ admits a germ of an invariant holomorphic curve of the form $\{w = \phe(z)\}$ if and only if $F$ is locally holomorphically conjugated to a map of the form \eqref{eq_1bis}.

The next lemma follows as a corollary of Brjuno's proof \cite{Brjuno1, Brjuno2}, and also as a corollary of a theorem of P\"oschel \cite{Po}). We report the proof here for the sake of completeness.

\begin{lemma}\label{lemmaone}
If $\lambda$ is a Brjuno number and $a_1(0) = 1$, then $F$ admits a germ of an invariant holomorphic curve of the form $\{w = \phe(z)\}$.
\end{lemma}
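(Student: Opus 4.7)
The plan is to solve the functional equation \eqref{eq_phi1} by matching power series coefficients and then prove convergence by a Brjuno-type majorant argument. Since $F$ is a germ at $O$ mapping $O$ to $O$, we have $a_0(0) = 0$, so we search for $\phe(z) = \sum_{n \geq 1} \phe_n z^n$ with $\phe(0) = 0$. Writing $a_j(z) = \sum_m a_{j,m} z^m$ and comparing the coefficient of $z^n$ on the two sides of \eqref{eq_phi1}, the left-hand side contributes $\lambda^n \phe_n$ and the right-hand side contributes $a_{0,n}$, plus $a_{1,0}\phe_n = \phe_n$ (using $a_1(0) = 1$), plus a polynomial expression $R_n(\phe_1, \ldots, \phe_{n-1})$ in the earlier coefficients whose coefficients are non-negative integer combinations of the Taylor coefficients of $a_1, \ldots, a_d$. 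Rearranging,
$$
\phe_n = \frac{a_{0,n} + R_n(\phe_1,\ldots,\phe_{n-1})}{\lambda^n - 1}.
$$
The denominator is non-zero for every $n \geq 1$ because $\lambda = e^{2\pi i \theta}$ with $\theta$ irrational, so this recursion uniquely determines a formal solution $\phe$.

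The substantive issue is convergence. Since the germs $a_0,\ldots,a_d$ are holomorphic on some polydisk, we may fix $r > 0$ and $M > 0$ with $|a_{j,m}| \leq M r^{-m}$. Passing to absolute values and replacing $\lambda^n - 1$ by $|\lambda^n - 1|$ in the recursion produces a majorant series $\hat\phe(z) = \sum_{n \geq 1} \hat\phe_n z^n$ with non-negative coefficients satisfying $|\phe_n| \leq \hat\phe_n$ for every $n$. The problem thus reduces to showing that the majorant $\hat\phe$ has positive radius of convergence.

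The hard part is bounding the accumulated effect of the small divisors $|\lambda^n - 1|$ that enter the recursion. This is exactly the situation handled by Brjuno's classical argument for one-dimensional linearization, and the Brjuno condition \eqref{eq:brjuno} on $\lambda$ is precisely what is needed to obtain an estimate of the form $\hat\phe_n \leq C^n$ for some $C > 0$, using the bound on $\sum_{k\geq 0} 2^{-k}\log(1/\omega(2^{k+1}))$ to control the telescoping product of small-divisor reciprocals. Rather than redo the combinatorial bookkeeping in detail, one may either transcribe the majorant estimate from Brjuno's proof \cite{Brjuno1,Brjuno2} directly to this setting (the only change is that the nonlinear terms are now $a_j(z)\phe(z)^j$ rather than monomials $c_n w^n$, which affects only the coefficients of $R_n$ but not the structure of the small-divisor estimate), or appeal to Pöschel's theorem \cite{Po} on the existence of local holomorphic invariant manifolds for Brjuno germs, which yields the holomorphic curve $\{w = \phe(z)\}$ as the stable manifold tangent to the eigenspace of the eigenvalue $a_1(0) = 1$.
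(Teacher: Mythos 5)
Your proposal is correct and follows essentially the same route as the paper: set up the formal recursion $(\lambda^n-1)\phe_n = a_{0,n} + R_n(\phe_1,\dots,\phe_{n-1})$ with $\phe_0=0$, note the small divisors never vanish, and then observe that the resulting majorant inequalities are exactly those appearing in Brjuno's linearization proof (the paper likewise defers the small-divisor bookkeeping to \cite{Brjuno1,Brjuno2}, and also notes the alternative via P\"oschel's theorem \cite{Po}, just as you do).
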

\begin{proof}
First we shall search for a solution $\phe$ of \eqref{eq_phi1} in the set of formal power series, and then we shall prove such solution is indeed convergent. Write
$$
\phe(z) = \sum_{n\ge 0} \phe_n z^n,\quad a_j(z) = \sum_{n\ge 0} a_{j, n}z^n~\hbox{for}~j=0, \dots, d.
$$
Then \eqref{eq_phi1} can be written as
$$
\begin{aligned}
\sum_{n\ge 0} a_{0, n}z^n
& +
\left(\sum_{n\ge 0} a_{1, n}z^n\right)\left(\sum_{n\ge 0} \phe_n z^n\right)
+
\cdots \\
\cdots
 & +
\left(\sum_{n\ge 0} a_{d, n}z^n\right)\left(\sum_{n\ge 0} \phe_n z^n\right)^d
-
\sum_{n\ge 0} \phe_n\lambda^n z^n
=
0.
\end{aligned}
$$
Recalling that $a_{0,0} =0$ and $a_{1,0}= 1$ it follows that $\phe_0$ has to satisfy
\begin{equation}\label{polyphi}
(a_{1,0}-1)\phe_0 + a_{2,0}\phe_0^2 + \cdots + a_{d,0}\phe_0^d
=
a_{2,0}\phe_0^2 + \cdots + a_{d,0}\phe_0^d
=
0.
\end{equation}
We can therefore choose the root $\phe_0=0$ of the polynomial \eqref{polyphi}, and for any $p\ge 1$ we obtain
$$
(\lambda^p - 1) \phe_p
=
a_{0,p} + \sum_{n=1}^p a_{1,n}\phe_{p-n} + \sum_{m=2}^d \sum_{n=0}^p a_{m,n} \sum_{i_1+\cdots+ i_m = p-n} \phe_{i_1}\cdots\phe_{i_m}.
$$
Since $\lambda^p - 1 \neq 0$, these equalities give a recursive definition of the formal power series $\phe$. To show that the power series is convergent, note that since $a_0(z), \dots, a_d(z)$ are holomorphic functions on $D(0,\rho)$ for some $\rho>0$, there exists $M>0$ such that $|a_{m,n}|\le \frac{M}{\rho^n}$ for $m=1,\dots, d$ and for all $n\ge 0$. Therefore we have
\begin{equation}\label{brjuno}
|\phe_p|
\le
\frac{M}{\rho^p|\lambda^p - 1|}
\left( 1 + \sum_{m=2}^d \sum_{i_1+\cdots+ i_m = p} |\phe_{i_1}|\cdots|\phe_{i_m}|
\right),
\end{equation}
for all $p\ge 2$. These inequalities are exactly the same as the inequalities occurring in the proof of Brjuno's Theorem \cite{Brjuno1, Brjuno2}, and so, thanks to the assumption that $\lambda$ is a Brjuno number, we deduce the convergence of $\phe(z)$.
\end{proof}


\begin{lemma}\label{le_2}
Let $F$ be a germ of holomorphic skew-product of the form
\begin{equation*}
F(z,w)
=
\left(\lambda z, w + a_1(z) w + \sum_{j\ge 2} a_j(z)w^j\right)
\end{equation*}
with $a_1(0)=0$, $a_1(z)\not\equiv 0$, and $\lambda$ a Brjuno number. Then there exists $\rho_1>0$ and $\Phi_1\colon D(0, \rho_1)\times\C\to D(0, \rho_1)\times\C$ a holomorphic change of coordinates of the form
$$
\Phi_1(z,w)
=
(z, w + \psi(z)w)
$$
with $\psi(0)=0$ and such that
\begin{equation}\label{eq_3}
\Phi_1^{-1}\circ F\circ \Phi_1 (z,w)
=
\left(\lambda z, w + \sum_{j\ge 2} \alpha_j(z)w^j\right),
\end{equation}
where the $\alpha_j(z)$'s are holomorphic functions on $D(0,\rho_1)$.
\end{lemma}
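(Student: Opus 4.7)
The plan is to find $\psi$ such that $\Phi_1(z,w)=(z,(1+\psi(z))w)$ conjugates $F$ to the desired normal form. A direct substitution together with division by $1+\psi(\lambda z)$ shows that
$$
\Phi_1^{-1}\circ F\circ\Phi_1(z,w)=\left(\lambda z,\ \frac{(1+a_1(z))(1+\psi(z))}{1+\psi(\lambda z)}w+\sum_{j\ge 2}\alpha_j(z)w^j\right),
$$
with $\alpha_j(z)=a_j(z)(1+\psi(z))^j/(1+\psi(\lambda z))$. Hence killing the linear term in $w$ reduces to producing a holomorphic germ $\psi$ with $\psi(0)=0$ solving the functional equation
$$
(1+a_1(z))(1+\psi(z))=1+\psi(\lambda z).
$$

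Since $a_1(0)=0$, setting $\beta(z):=\log(1+a_1(z))$ and $\eta(z):=\log(1+\psi(z))$ (both well-defined germs vanishing at the origin) linearizes the functional equation to the cohomological equation
$$
\eta(\lambda z)-\eta(z)=\beta(z).
$$
Writing $\eta(z)=\sum_{n\ge 1}\eta_n z^n$, $\beta(z)=\sum_{n\ge 1}\beta_n z^n$ and comparing coefficients yields the unique formal solution $\eta_n=\beta_n/(\lambda^n-1)$; note that $\lambda^n\ne 1$ for every $n\ge 1$ since the Brjuno assumption forces $\lambda$ to be irrational.

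The main obstacle is convergence: the small divisors $\lambda^n-1$ can become extremely small, and only a Diophantine-type hypothesis on $\lambda$ will save us. This is exactly where the Brjuno condition enters. If $\beta$ is holomorphic on $D(0,\rho)$ with $|\beta_n|\le M\rho^{-n}$, then $|\eta_n|\le M/(\rho^n |\lambda^n-1|)$, and the standard Brjuno bounds on $\prod|\lambda^n-1|^{-1}$ — a simpler, purely linear variant of the estimate \eqref{brjuno} used in Lemma~\ref{lemmaone}, and originally due to Brjuno \cite{Brjuno1,Brjuno2} and P\"oschel \cite{Po} — produce a radius $\rho_1>0$ on which $\sum|\eta_n|\rho_1^n<+\infty$. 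Once $\eta$ is known to converge on $D(0,\rho_1)$, the function $\psi:=e^{\eta}-1$ is a holomorphic germ with $\psi(0)=0$ satisfying the desired equation. Shrinking $\rho_1$ if necessary so that $1+\psi(z)\ne 0$ on $D(0,\rho_1)$, the map $\Phi_1(z,w)=(z,(1+\psi(z))w)$ is a biholomorphism of $D(0,\rho_1)\times\C$ onto itself; the conjugation formula \eqref{eq_3} and the holomorphy of the $\alpha_j(z)$ on $D(0,\rho_1)$ then follow directly from the computation above.
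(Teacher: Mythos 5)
Your proposal is correct, and it arrives at exactly the same functional equation as the paper: writing $\Phi_1^{-1}(z,w)=(z,w/(1+\psi(z)))$, the linear term in $w$ is killed precisely when $(1+a_1(z))(1+\psi(z))=1+\psi(\lambda z)$, which is the paper's equation \eqref{eq_conj_2} in multiplicative form. Where you genuinely diverge is in how you solve it. The paper expands $\psi$ and $a_1$ in power series and solves the resulting convolution recursion $\psi_p=(\lambda^p-1)^{-1}\bigl(a_{1,p}+\sum_{m=1}^{p-1}a_{1,m}\psi_{p-m}\bigr)$ directly, controlling convergence via the Brjuno-type majorant inequality \eqref{brjuno_easy}, in the same spirit as Lemma~\ref{lemmaone}. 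You instead take logarithms, which turns the multiplicative equation into the linear cohomological equation $\eta(\lambda z)-\eta(z)=\beta(z)$ with the explicit diagonal solution $\eta_n=\beta_n/(\lambda^n-1)$; convergence then needs only the bound $|\eta_n|\le M/(\rho^n|\lambda^n-1|)$, i.e.\ the weaker arithmetic condition $\limsup_n \frac1n\log|\lambda^n-1|^{-1}<+\infty$, which the Brjuno condition implies (and which the paper itself isolates in the Remark following Lemma~\ref{le_gen}). Your route buys an explicit coefficient formula, decouples this step from the full Brjuno majorant machinery, and makes transparent that the small divisors enter here only linearly; the paper's route buys uniformity, treating Lemmas~\ref{lemmaone}, \ref{le_2} and \ref{le_gen} with one and the same recursive estimate. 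The only points worth stating explicitly in your write-up are that $\lambda$ is not a root of unity (so $\lambda^n\ne1$), that the principal branches of the logarithms are legitimate because $a_1(0)=0$ forces $1+a_1$ near $1$, and that $\psi=e^\eta-1$ automatically satisfies $1+\psi\ne0$, so the final shrinking of $\rho_1$ is only needed to stay inside the domain of $\eta$.
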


\begin{proof}
Observe that $\Phi_1^{-1}(z,w)= (z, \frac{w}{1+\psi(z)})$, so it suffices to prove that we can find $\psi(z)$ holomorphic on $D(0, \rho_1)$ for $\rho_1>0$ sufficiently small such that
$\psi(0)=0$, $|\psi(z)|<1$ and
\begin{equation}\label{eq_conj_2}
\psi(z)(1+ a_1(z)) - \psi(\lambda z) +  a_1(z)= 0.
\end{equation}
As before, we shall first search for a solution of \eqref{eq_conj_2} in the set of formal power series, and then we prove that there exists $\rho_1>0$ such that the solution is indeed convergent on $D(0, \rho_1)$.
Write
$$
\psi(z) = \sum_{n\ge 1} \psi_n z^n,\quad  a_1(z) = \sum_{n\ge 1}  a_{1,n}z^n.
$$
Then \eqref{eq_conj_2} can be written as
$$
\sum_{n\ge 1} \psi_n z^n
-
\sum_{n\ge 1} \lambda^n\psi_n z^n
+
\sum_{n\ge 1}  a_{1,n}z^n
+
\left(\sum_{n\ge 1} a_{1,n} z^n\right)\left(\sum_{m\ge 1} \psi_{m}z^m\right)
=0.
$$
Hence we can set
$$
\psi_1 =\frac{ a_{1,1}}{\lambda-1},
$$
and for any $p\ge 2$ we have
$$
\psi_p
=
\frac{1}{\lambda^p - 1}
\left( a_{1,p} + \sum_{m=1}^{p-1}  a_{1,m}\psi_{p-m} \right).
$$
Since $a_1(z)$ is holomorphic on $D(0,\rho)$, there exists $M>0$ such that $| a_{1,n}|\le M/\rho^n$ for all $n\ge 1$. Therefore we have
\begin{equation}\label{brjuno_easy}
|\psi_p|
\le
\frac{M}{\rho^p|\lambda^p - 1|}
\left(1 + \sum_{m=1}^{p-1} |\psi_{m}| \right),
\end{equation}
for all $p\ge 2$, and again thanks to the assumption that $\lambda$ is a Brjuno number, we deduce the convergence of $\psi(z)$.
\end{proof}

\begin{lemma}\label{le_gen}
Let $F$ be a germ of holomorphic skew-product of the form
\begin{equation}\label{eq_gen}
F(z,w)
=
\left(\lambda z, w + a_2 w^2 + \cdots +  a_k w^k + \sum_{m\ge k+1}\alpha_m(z) w^m\right),
\end{equation}
with $\lambda$ a Brjuno number, $2\le k < +\infty$, $a_2, \dots , a_k\in \C$ and $\{\alpha_m(z)\}_{m\ge k+1}$ holomorphic functions on $D(0, r)$.
Then there exist $\rho>0$ and a holomorphic change of coordinates $\Phi$, defined on the bidisk $D(0, \rho)\times D(0, \rho)$, of the form
$$
\Psi(z,w)
=
(z, w + h(z)w^{k+1})
$$
with $h(0)=0$, such that
\begin{equation}\label{eq_gen2}
\Phi^{-1}\circ F\circ \Phi (z,w)
=
\left(\lambda z, w  + a_2 w^2 + \cdots + a_{k+1} w^{k+1} + \sum_{m\ge k+2} \beta_m(z) w^m\right),
\end{equation}
where $a_{k+1} = \alpha_{k+1}(0)$ and $\{\beta_m(z)\}_{m\ge k+2}$ are holomorphic functions on $D(0, \rho)$.
\end{lemma}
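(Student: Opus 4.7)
The plan is to determine $h(z)$ by comparing coefficients in the conjugacy identity $F\circ\Phi=\Phi\circ\widetilde{F}$, where I set $\Phi(z,w)=(z,w+h(z)w^{k+1})$ and $\widetilde{F}(z,w)=(\lambda z,\widetilde{g}(z,w))$ is the desired normal form, and then to obtain convergence via the Brjuno hypothesis on $\lambda$. The conjugacy identity in the second coordinate reads
\begin{equation*}
\widetilde{g}(z,w)+h(\lambda z)\,\widetilde{g}(z,w)^{k+1}=G(z,w),
\end{equation*}
where
\begin{equation*}
G(z,w)=(w+h(z)w^{k+1})+\sum_{j=2}^{k}a_j\,(w+h(z)w^{k+1})^j+\sum_{m\ge k+1}\alpha_m(z)\,(w+h(z)w^{k+1})^m.
\end{equation*}

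The key observation is that because $h(z)w^{k+1}$ has $w$-degree $k+1$, the binomial expansion of $(w+h(z)w^{k+1})^j$ contributes only $a_j w^j$ in degrees $\le k+1$ for $2\le j\le k$, while $(w+h(z)w^{k+1})^m$ for $m\ge k+1$ contributes (in degrees $\le k+1$) only $\alpha_{k+1}(z)w^{k+1}$, coming from $m=k+1$. Writing $\widetilde{g}=w+a_2w^2+\cdots+a_kw^k+b(z)w^{k+1}+O(w^{k+2})$, the coefficients of $w^j$ for $j\le k$ match automatically, and equating the coefficients of $w^{k+1}$ gives $b(z)+h(\lambda z)=h(z)+\alpha_{k+1}(z)$. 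Setting $b(z):=a_{k+1}:=\alpha_{k+1}(0)$ turns this into the cohomological equation
\begin{equation*}
h(\lambda z)-h(z)=\alpha_{k+1}(z)-\alpha_{k+1}(0).
\end{equation*}

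Expanding $h(z)=\sum_{n\ge 1}h_n z^n$ and $\alpha_{k+1}(z)-\alpha_{k+1}(0)=\sum_{n\ge 1}c_n z^n$ yields the explicit recursion $h_n=c_n/(\lambda^n-1)$, defining $h$ uniquely as a formal power series with $h(0)=0$. For convergence I would argue as in Lemmas~\ref{lemmaone} and~\ref{le_2}: since $\alpha_{k+1}$ is holomorphic on $D(0,r)$ there exists $M>0$ with $|c_n|\le M/r^n$, whence
\begin{equation*}
|h_n|\le\frac{M}{r^n\,|\lambda^n-1|}.
\end{equation*}
The Brjuno assumption on $\lambda$ then implies $\sum|h_n|\rho^n<\infty$ for some $\rho>0$, by the same small-divisor manipulation used in the proof of Brjuno's theorem, which is \emph{a fortiori} simpler here because the equation is linear in $h$. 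Shrinking $\rho$ so that $|h(z)w^{k+1}|<|w|/2$ on $D(0,\rho)\times D(0,\rho)$ ensures $\Phi$ is a biholomorphism there; the remaining coefficients $\beta_m(z)$ for $m\ge k+2$ appear as polynomial expressions in $a_2,\ldots,a_k$, the $\alpha_j(z)$ with $j\le m$, and the values of $h$ at $z$ and $\lambda z$, and are therefore holomorphic on $D(0,\rho)$.

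The main obstacle is the convergence of $h$: the formula $h_n=c_n/(\lambda^n-1)$ involves small divisors which, absent the Brjuno hypothesis, can cause the radius of convergence of $h$ to collapse to zero. Once the convergence is secured, the rest of the argument is formal coefficient bookkeeping.
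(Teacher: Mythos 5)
Your proposal is correct and follows essentially the same route as the paper: both reduce the problem to the linear cohomological equation $h(\lambda z)-h(z)=\alpha_{k+1}(z)-\alpha_{k+1}(0)$, solve it formally by $h_n=c_n/(\lambda^n-1)$, and obtain convergence from the Brjuno condition via the bound $|h_n|\le M/(r^n|\lambda^n-1|)$. The only difference is presentational — you spell out the degree-by-degree coefficient matching that the paper leaves implicit.
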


\begin{proof}
Note that if $\Psi$ exists holomorphic, we can only say that
$$
\Psi^{-1}(z,w)= (z, w - \xi(z)w^{k+1} + w^{k+2}R(z,w)),
$$
where $R(z,w)$ is holomorphic on $D(0, \rho)\times D(0, \rho)$.
It suffices to prove that we can find $\xi(z)$ holomorphic on $D(0, \rho)$ for $\rho>0$ sufficiently small such that $\xi(0)=0$ and
\begin{equation}\label{eq_conj_3}
\xi(z) - \xi(\lambda z) + \alpha_{k+1}(z)= \alpha_{k+1}(0).
\end{equation}
As before, we first search for a solution of \eqref{eq_conj_3} in the set of formal power series, and then we shall prove that there exists $\rho>0$ so that the solution is convergent on $D(0, \rho)$.
Write
$$
\xi(z) = \sum_{n\ge 1} \xi_n z^n,\quad \alpha_{k+1}(z) = a_{k+1} +\sum_{n\ge 1} a_{k+1,n} z^n.
$$
Therefore \eqref{eq_conj_3} can be written as
$$
\sum_{n\ge 1} \xi_n z^n
-
\sum_{n\ge 1} \lambda^n\xi_n z^n
+
a_{k+1}
+ \sum_{n\ge 1} a_{k+1,n}z^n
=
a_{k+1}.
$$
We can then set
$$
\xi_n =\frac{\alpha_{k+1,n}}{\lambda^n-1},
$$
for all $n\ge 1$.
Since $\alpha_{k+1}(z)$ is holomorphic on $D(0,r)$, there exists $M>0$ such that $|\alpha_{k+1,n}|\le M/r^n$ for all $n\ge 1$. Therefore we have
\begin{equation}\label{brjuno_easy_2}
|\xi_n|
\le
\frac{M}{r^n|\lambda^n - 1|},
\end{equation}
for all $n\ge 1$, and once again thanks to the assumption that $\lambda$ is a Brjuno number, we deduce the convergence of $\xi(z)$.
\end{proof}

\begin{remark}
We note that to prove the convergence of $\phe$ it suffices to have
$$
\limsup_{m\to+\infty}\frac{1}{m}\log\frac{1}{|\lambda^m-1|}<+\infty.
$$
\end{remark}

We can finally restate and prove Proposition \ref{localcoordinates}.

\medskip

\noindent {\bf Proposition \ref{localcoordinates}.} \emph{
Let $F$ be a holomorphic skew-product of the form
\begin{equation}\label{eqprop0}
F(z,w) = (f(z),g_z(w))
\end{equation}
with $f(z) = \lambda  z + O(z^2)$, $g_0(0)=0$, and $g_0'(0)=1$. Assume $\lambda$ is a Brjuno number. If $g_0(w)\equiv w$, then $F$ is holomorphically linearizable. If $g_0(w) = w + g_{0,k+1} w^{k+1} + O(w^{k+2})$ with $g_{0,k+1}\ne0$ for some $k \ge 1$, then for any $h\ge 0$ there exists a local holomorphic change of coordinates near the origin conjugating $F$ to a map of the form $\widetilde{F}(z,w) = (\lambda z, \tilde{g}(z,w))$ satisfying
\begin{equation}\label{eqprop1}
\tilde{g}(z,w) = w + g_{0,k+1} w^{k+1} + \cdots + g_{0,k+h+1} w^{k+h+1} + \sum_{j\ge h}w^{k+j+2}\alpha_{k+j+2}(z),
\end{equation}
where, for $j\ge h$, $\alpha_{k+j+2}(z)$ is a holomorphic function in $z$ such that $\alpha_{k+j+2}(0) = g_{0,k+j+2}$.
}

\medskip

\begin{proof} Thanks to the hypothesis that $\lambda$ is a Brjuno number, there exists a local change of variables $\phe_f\colon D(0, \rho)\to D(0, \rho)$, where $\rho>0$, such that $\phe_f(0)=0$, $\phe_f'(0)=1$ and $\phe_f^{-1}\circ f\circ \phe_f (z) = \lambda z$.
Therefore, up to conjugating via the holomorphic change of coordinates $\Phi_0\colon D(0, \rho)\times\C\to D(0, \rho)\times\C$ defined by $\Phi_0(z,w) = (\phe_f(z), w)$, for all $(z,w)\in D(0, \rho)\times\C$ we may assume that $F$ is of the form
\begin{equation}\label{eq_1}
F(z,w)
=
\left(\lambda z, \sum_{j\ge 0} a_j(z)w^j\right)
\end{equation}
where the $a_j(z)$'s are holomorphic functions on $D(0,\rho)$, $a_0(0)=0$ and $a_1(0) = 1$.

\medskip

If $g_0(w) \equiv w$, then $a_j(0) =0$ for all $j\ge 2$, $F$ satisfies the hypotheses of \cite[Theorem 1.11]{jasmin}, and hence it is holomorphically linearizable.

\medskip

If $g_0(w) = w + g_{0,k+1} w^{k+1} + O(w^{k+2})$ with $g_{0,k+1}\ne0$ for some $k \ge 1$, then
$a_1(0)=1$, $a_0(0)=a_2(0) =\cdots= a_k(0)=0$, and $a_{k+1}(0)=  g_{0,k+1} \ne 0$. The rest of the proof follows from Lemma \ref{lemmaone}, Lemma \ref{le_2} and Lemma \ref{le_gen}.
\end{proof}

%
%
%
%
%
%
%
%
%
%
%
%
%

\section{Non-Brjuno rotations}\label{section:nonBrjuno}

If $\lambda$ is elliptic and not Brjuno, Proposition \ref{localcoordinates} does not hold in general. In fact it is possible to construct examples arguing similarly to Cremer \cite{Cremer}.

\begin{defn}
A complex number $\lambda\in\C$ with $|\lambda|=1$ is called {\it Cremer} if
$$
\limsup_{m\to\infty} \frac{1}{m} \log\frac{1}{\omega(m)} = +\infty,
$$
where $\omega(m) = \min_{2\le k\le m} |\lambda^k - \lambda|$.
\end{defn}

We have the following elementary example.

\begin{example}
Let $\lambda$ be a Cremer number. Then the polynomial skew-product
$$
F(z,w) = (\lambda z, w + z + zw)
$$
does not admit a holomorphic invariant curve of the form $\{w =\phe(z)\}$.
In fact in this case the computation for a formal solution $\phe(z) = \sum_{n\ge0}\phe_n z^n$ as in Lemma \ref{lemmaone} gives the explicit formulas
\begin{equation*}
\begin{aligned}
\phe_0&\in\C,\\
\phe_1&= \frac{1+ \phe_0}{\lambda -1},\\
\phe_n &= \frac{\phe_{n-1}}{\lambda^n - 1} = \prod_{j=1}^n\frac{1+ \phe_0}{\lambda^j-1},~\hbox{for}~n\ge 2.
\end{aligned}
\end{equation*}
Therefore
\begin{equation*}
\begin{aligned}
\limsup_{m\to\infty} \frac{1}{m} \log|\phe_m|
&= \limsup_{m\to\infty} \frac{1}{m}\log|1+\phe_0|
+ \limsup_{m\to\infty} \frac{1}{m}\sum_{j=1}^m \log \frac{1}{|\lambda^j-1|}\\
&=\limsup_{m\to\infty} \frac{1}{m}\log \frac{1}{\displaystyle\min_{1\le k\le m} |\lambda^k - 1| }+ \limsup_{m\to\infty} \frac{1}{m}\sum_{j=1\atop j\ne j_m}^m \log \frac{1}{|\lambda^j-1|},
\end{aligned}
\end{equation*}
where $j_m\in\{1,\dots m\}$ is such that $\min_{1\le k\le m} |\lambda^k - 1| = |\lambda^{j_m}-1|$. And hence
$$
\limsup_{m\to\infty} \frac{1}{m} \log|\phe_m|
\ge\limsup_{m\to\infty} \frac{1}{m} \log\frac{1}{\omega(m)} + \limsup_{m\to\infty} \frac{m-1}{m}\log\frac{1}{2} = +\infty.
$$
Note that if $\displaystyle\limsup_{m\to\infty}\log\frac{1}{\omega(m)} < +\infty$, then $\phe$ is convergent.
\end{example}

Arguing as in Cremer's example \cite{Cremer} we can also show the existence of holomorphic skew-products without invariant holomorphic curves of the form $\{w=\varphi(z)\}$, having an elliptic invariant fiber that is not point-wise fixed.

\begin{prop}
Let $\lambda$ be a Cremer number. Then there exists a skew-product of the form
$$
F(z,w) = (\lambda z, w + a(z) + w^2),
$$
with $a(z)$ holomorphic and satisfying $a(0)=0$, such that $F$ does not admit a holomorphic invariant curve of the form $\{w =\phe(z)\}$.
\end{prop}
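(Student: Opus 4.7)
The plan is to adapt Cremer's classical construction to this skew-product. An invariant graph $\{w=\varphi(z)\}$ exists iff
\begin{equation*}
\varphi(\lambda z)=\varphi(z)+a(z)+\varphi(z)^2,
\end{equation*}
and evaluating at $z=0$ forces $\varphi(0)^2=0$, hence $\varphi(0)=0$. Comparing Taylor coefficients yields the recursion
\begin{equation*}
(\lambda^n-1)\varphi_n = a_n + P_n, \qquad P_n = \sum_{i=1}^{n-1}\varphi_i\varphi_{n-i},
\end{equation*}
valid for $n\ge 1$ (with $P_1=0$). Since $\lambda^n\ne 1$ for every $n\ge 1$, this recursion uniquely determines the formal series $\varphi$ from $a$. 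The problem therefore reduces to producing a holomorphic $a$ with $a(0)=0$ for which the formal $\varphi$ has radius of convergence zero.

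To do this I would use the Cremer condition to pick a strictly increasing sequence of integers $n_k\ge 2$ with
\begin{equation*}
|\lambda^{n_k}-1| \le e^{-k\, n_k}.
\end{equation*}
Such a sequence exists because $|\lambda^k-\lambda|=|\lambda^{k-1}-1|$, so $\omega(m)=\min_{1\le j\le m-1}|\lambda^j-1|$, and the minimizing indices must tend to infinity and realize super-exponential decay along a subsequence. I would then construct $a$ inductively: set $a_n=0$ for $n\notin\{n_k\}$, and at each step $n=n_k$, once $\varphi_1,\dots,\varphi_{n_k-1}$ and hence $P_{n_k}$ are determined by earlier choices, put
\begin{equation*}
a_{n_k} = 2^{-n_k}\cdot u_k,
\end{equation*}
where $u_k\in\C$ has modulus one and the same argument as $P_{n_k}$ (any unit vector if $P_{n_k}=0$). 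This yields $|a_{n_k}+P_{n_k}|\ge 2^{-n_k}$ while keeping $|a_n|\le 2^{-n}$, so that $a$ is holomorphic on $D(0,2)$ and satisfies $a(0)=0$.

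The conclusion follows immediately from the recursion:
\begin{equation*}
|\varphi_{n_k}| = \frac{|a_{n_k}+P_{n_k}|}{|\lambda^{n_k}-1|} \ge 2^{-n_k}\, e^{k\, n_k} = \left(\tfrac{e^k}{2}\right)^{n_k},
\end{equation*}
so $|\varphi_{n_k}|^{1/n_k}\to\infty$ and $\varphi$ has radius of convergence zero. Since the formal solution is unique, $F$ admits no holomorphic invariant curve $\{w=\varphi(z)\}$. The one delicate point I expect is the phase choice for $a_{n_k}$: a careless choice of argument could allow $a_{n_k}$ to nearly cancel $P_{n_k}$, making $\varphi_{n_k}$ small instead of large; aligning the arguments of $a_{n_k}$ and $P_{n_k}$ is precisely what prevents this and, combined with the Cremer-type super-exponential decay of $|\lambda^{n_k}-1|$, produces the required blow-up of the Taylor coefficients.
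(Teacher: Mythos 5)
Your proposal is correct and follows essentially the same route as the paper: both derive the identical recursion $(\lambda^n-1)\varphi_n=a_n+P_n$ and make an adaptive choice of the Taylor coefficients of $a$ to prevent cancellation in the numerator, then use the Cremer condition on $|\lambda^n-1|$ to force super-exponential growth of $|\varphi_n|$ along a subsequence. The only (immaterial) differences are that the paper picks $a_n\in\{0,1\}$ to get the uniform bound $|a_n+P_n|\ge\tfrac12$ for every $n$, whereas you align phases to get $|a_{n_k}+P_{n_k}|\ge 2^{-n_k}$ only along the subsequence, which suffices just as well.
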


\begin{proof}
Consider $a(z) = \sum_{n\ge 1} a_n z^n$ the power series expansion of $a(z)$ near the origin.
In this case the computation for a formal solution $\phe(z) = \sum_{n\ge 0}\phe_n z^n$ as in Lemma \ref{lemmaone} gives
\begin{equation*}
\begin{aligned}
\phe_0&=0,\\
\phe_1&= \frac{a_1}{\lambda -1},\\
\phe_n &= \frac{1}{\lambda^n - 1}\left(a_n + \sum_{j=1}^{n-1} \phe_j\phe_{n-j}\right)~\hbox{for}~n\ge 2.
\end{aligned}
\end{equation*}
We can then set $a_1=1$ and recursively choose $a_n\in\{0,1\}$ such that
$$
\left|a_n + \sum_{j=1}^{n-1} \phe_j\phe_{n-j}\right|
\ge
\frac{1}{2}
$$
for $n\ge 2$.
Therefore
\begin{equation*}
\begin{aligned}
\limsup_{m\to\infty} \frac{1}{m} \log|\phe_m|
&= \limsup_{m\to\infty} \frac{1}{m}\log\left|\frac{1}{\lambda^n - 1}\left(a_n + \sum_{j=1}^{n-1} \phe_j\phe_{n-j}\right)\right| \\
&\ge \limsup_{m\to\infty} \frac{1}{m}\log\frac{1}{2} + \limsup_{m\to\infty} \frac{1}{m}\log\frac{1}{|\lambda^n - 1|}\\
&\ge\limsup_{m\to\infty} \frac{1}{m} \log\frac{1}{\omega(m)} = +\infty,
\end{aligned}
\end{equation*}
and this concludes the proof.
\end{proof}

The same approach can be used to construct polynomial skew-products of higher degrees giving counter-examples to Proposition \ref{localcoordinates}. Moreover we have the following result.

\begin{prop}
Let $F(z,w) = (f(z), g_z(w))$ be a holomorphic skew-product with an elliptic linearizable invariant fiber. If $F$ does not admit a holomorphic invariant curve on the invariant fiber, then the parabolic Fatou components of $g_0$ do not bulge.
\end{prop}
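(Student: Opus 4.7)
The plan is to argue by contrapositive: assuming that some parabolic Fatou component $V$ of $g_0$ bulges to a two-dimensional Fatou component $\tilde V$ of $F$, I construct a holomorphic invariant curve $\{w = \varphi(z)\}$ passing through a parabolic point of $g_0$, contradicting the hypothesis. Replacing $F$ by an iterate, assume $g_0$ has a parabolic fixed point $p \in \partial V$ with $g_0'(p) = 1$. Choose $w_0$ in an attracting petal at $p$ and $r > 0$ small enough that the horizontal disk $\{(z,w_0) : |z| < r\}$ is contained in $\tilde V$, which is possible because $\tilde V$ is open and contains $(0,w_0)$. Since $g_0(V) \subset V$, the component $\tilde V$ is forward invariant under $F$, so $\{F^n\}$ is a normal family on $\tilde V$.

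Because $\lambda = e^{2\pi i\theta}$ with $\theta$ irrational, the orbit $\{\lambda^n\}$ is dense in $S^1$, so one extracts $n_k \to \infty$ with $\lambda^{n_k} \to 1$. Let $\varphi_k(z)$ and $\psi_k(z)$ denote the second coordinates of $F^{n_k}(z,w_0)$ and $F^{n_k+1}(z,w_0)$ respectively. By normality and a diagonal extraction, arrange that $\varphi_k \to \varphi$ and $\psi_k \to \psi$ uniformly on compact subsets of $D(0,r)$, with $\varphi$ and $\psi$ holomorphic. Since $g_0^{n_k}(w_0) \to p$ in the parabolic basin, $\varphi(0) = p$. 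The skew-product identity $\psi_k(z) = g_{\lambda^{n_k} z}(\varphi_k(z))$, together with uniform convergence and $\lambda^{n_k} z \to z$, gives
\[
\psi(z) = g_z(\varphi(z)).
\]

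What remains is to show $\psi(z) = \varphi(\lambda z)$, which yields the functional equation $g_z(\varphi(z)) = \varphi(\lambda z)$ and makes $\{w = \varphi(z)\}$ an $F$-invariant curve through $(0,p)$. Using $F^{n_k+1}(z,w_0) = F^{n_k}(\lambda z, g_z(w_0))$, this identity reduces to the statement that the subsequential limit of the $w$-component of $F^{n_k}(\lambda z, w)$ does not depend on the initial $w$-value. This is the main obstacle. At $z = 0$ the claim is immediate, since every point of $V$ is attracted to $p$ under $g_0$. Off the invariant fiber, the plan is to exploit the parabolic structure at $p$: the Fatou coordinate on an attracting petal conjugates $g_0$ to a translation, so that orbits in the petal with different starting points coalesce at $p$, and one transports this coalescence to nearby fibers of $\tilde V$ via a normal-families argument using the equicontinuity of $\{F^n\}$ on $\tilde V$. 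Once $\varphi$ is shown to depend only on $z$ and to satisfy the functional equation, $\{w = \varphi(z)\}$ is the invariant curve whose non-existence was assumed, giving the contradiction.
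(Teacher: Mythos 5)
Your overall strategy (contradiction via constructing the invariant graph as a limit of iterated horizontal disks) is a reasonable alternative to the paper's argument, and the steps you do carry out --- extraction of $n_k$ with $\lambda^{n_k}\to 1$, normality on the bulging component, and the identity $\psi(z)=g_z(\varphi(z))$ --- are fine. But the step you flag as ``the main obstacle'' is not a technical loose end: it is the entire mathematical content of the proposition, and the plan you sketch for it does not work as stated. What you need is that the limit of $\pi_2\circ F^{n_k}(z,w)$ is independent of $w$ on vertical slices of the bulging component $\tilde V$. Equicontinuity of $\{F^n\}$ on $\tilde V$ only says that nearby initial points have uniformly close orbits; it does not force distinct orbits to coalesce. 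The coalescence you invoke via Fatou coordinates happens only in the fiber $\{z=0\}$, and knowing that the limit map is constant on the single vertical slice $\{z=0\}\cap\tilde V$ does not propagate to nearby slices by any identity-principle argument: a holomorphic map can be constant on one vertical slice and injective on all others.

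The paper closes exactly this gap with a rank argument. Let $h=\lim F^{n_j}$ on $U$. The generic rank of $h$ cannot be $2$ because orbits in a parabolic component converge to the Julia set, whose interior is empty; it cannot be $0$ because the first coordinate of $h$ is the nonconstant rotation $z\mapsto\mu z$ (this uses linearizability of the invariant fiber). Hence the rank is $1$, so the level sets of $h$ are one-dimensional analytic subsets of $U$; since $\pi_1\circ h(z,w)=\mu z$, these level sets lie in vertical lines, which is precisely the $w$-independence you need. The paper then invokes the structure theorem of \cite{LP2014} to see that $h(U)$ is an injectively immersed, invariant Riemann surface through the parabolic point, locally a graph $\{w=\phi(z)\}$, whence the invariant disk. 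If you want to keep your more hands-on construction of $\varphi$, you should import at least the rank-$\ne 2$ step (image contained in the Julia set) to justify $\psi(z)=\varphi(\lambda z)$; without some such argument the proof is incomplete.
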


\begin{proof}
Up to a change of coordinates in a neighborhood of the invariant fiber, we can assume that $F$ is of the form
$$
F(z,w) = (\lambda z, g_z(w)) = (\lambda z, a_0(z) + a_1(z) w + \cdots + a_d(z)w^d).
$$
Assume by contradiction that there is a parabolic Fatou component of $g_0(w)$ bulging to a $2$-dimensional Fatou component $U$. Then there exists a subsequence $(n_j)\subset\N$ such that the maps $F^{n_j}$ must converge uniformly on compact subsets of $U$, say to a map $h: U \rightarrow \mathbb C$. The map $h$ has generic rank $0$, $1$ or $2$. Since the invariant fiber is linearizable, the bulging component cannot be contracted to a point, and thus the rank of $h$ cannot be $0$. Since the component of $g_0$ is parabolic, all orbits starting in $U$ must converge to the Julia set of $F$, and the generic rank of $h$ cannot be $2$. Thus the rank of $h$ must be $1$. It was shown in \cite{LP2014} that the analytic set $h(U)$ must in fact be an injectively immersed Riemann surface.

By passing to an iterate of $F$ we can assume that the one-dimensional component on the invariant fiber is invariant or pre-invariant. It therefore follows that the Fatou component $U$ must be invariant or pre-invariant as well, hence the limit set $h(U)$ must be invariant. Note that $h(U)$ must contain the corresponding parabolic fixed point of $g_0$, and is locally given as a graph $\{w = \phi(z)\}$. Since the invariant fiber is linearizable, this graph must contain an invariant holomorphic disk, which contradicts the hypothesis on $F$.
\end{proof}

\section{Expanding horizontal metrics}

In this section we will construct a continuously varying family of conformal metrics on the planes $\{z = z_0\}$, for $z_0$ sufficiently small. Before doing so we discuss the dynamics near the invariant curves through the parabolic points of $g_0$.  Let us recall that the degree of $g_w$ is assumed to be constant near $\{z = 0\}$, and at least $2$. Thus $g_0(w) \not\equiv w$. We consider the dynamics near a parabolic periodic point of $g_0$, which without loss of generality may be assumed to be the fixed point $w=0$.

It follows from Proposition \ref{localcoordinates} that, after a local holomorphic changes of coordinates, we can assume that, for $(z,w)\in D(0, \rho)^2$, $F$ is of the form
\begin{equation}
F(z,w)
=
\left(\lambda z, w  - w^{k+1} + a_{k+2}w^{k+2} + \cdots +a_{2k+1}w^{2k+1} +  \sum_{m\ge 2k+2} \alpha_m(z) w^m\right),
\end{equation}
where $k\ge 1$ and the functions $\{\alpha_m(z)\}_{m\ge 2k+2}$ are holomorphic on $D(0, \rho)$.
Hence, up to a polynomial change of coordinate of the form $(z,w)\mapsto (z, w + q(w))$ with $q(0)= q'(0)= 0$, we can assume
\begin{equation}\label{eq_gen_start_red2}
F(z,w)
=
\left(\lambda z, w  - w^{k+1} + b w^{2k+1} +  \sum_{m\ge 2k+2} \beta_m (z) w^m\right),
\end{equation}
where $k\ge 1$, $b$ is the index of $g_0(w)$ at the origin, and $\{\beta_m(z)\}_{m\ge 2k+2}$ are holomorphic functions on $D(0, \rho)$.

Denote by $P^+(\rho, \eta)$ the union of the attracting petals of $w\mapsto w-w^{k+1}$ of radius $\rho>0$ and wideness $\eta>0$. It follows that for $F$ of the form \eqref{eq_gen_start_red2} the open set
$$
B_{\rho,\eta} = \{(z,w)\in D(0, \rho)^2\mid w\in P^+(\rho, \eta)\}
$$
is forwards invariant under $F$ for $\rho$ and $\eta$ sufficiently small. Indeed, we can use the fact that we have a uniform bound for $|\beta_m(z)|$ if $|z|<\rho$ is small, in order to argue as in the proof of the Leau-Fatou Flower Theorem, see for example \cite{Milnor}. Moreover, if $(z_0, w_0)\in B_{\rho,\eta}$, then $w_n = \pi_2(F^n(z_0,w_0))$ converges to 0 tangent to an attracting direction of the function $w\mapsto w-w^{k+1}$. The set $B_{\rho, \eta}$ is an absorbing domain, in the sense that the orbit of any point $(z,w)$, with $|z|< \rho$, that lies in one of the bulging parabolic Fatou components corresponding to the parabolic fixed point $w =0$ of $g_0$ must eventually enter the set $B_{\rho, \eta}$.

\medskip

Let us now recall the one-dimensional construction of the expanding metric for the polynomial $p = g_0$. A more thorough discussion of such metrics can be found in \cite{CG1993}. By assumption all critical points of $p$ are contained in the basins of attracting and parabolic periodic cycles. As there are only finitely many of such cycles, by passing to a large iterate of $F$ we may assume that all periodic attracting and parabolic cycles are in fact fixed points.

Define $D_0 \subset \mathbb C_0$, a closed set containing a forward invariant neighborhood of each attracting fixed point, the parabolic fixed points, and the attracting petals of each parabolic fixed point. Then there exists a positive integer $N$ so that $D = p^{-N}(D_0)$ contains all critical points. Note that $D$ is closed and forward invariant, hence its complement $U$ is open and backwards invariant. Let $\mu_U$ be the hyperbolic metric on the complement of $D$. Given any neighborhood of the parabolic fixed points we can choose $N$ sufficiently small so that $p$ is expanding with respect to $\mu_U$ except for points lying in these parabolic neighborhoods.

Let $B$ be the union of the repelling parabolic petals, chosen sufficiently small, of all the parabolic fixed points. Define the metric $\mu_B$ near each parabolic fixed point by $\mu_B = |d\zeta|$, using the local coordinates
$$
p: \zeta \mapsto \zeta + \zeta^{k+1} + O(\zeta^{k+2}).
$$
Now set $\mu = \inf\{\mu_U, M \cdot \mu_B\}$ on the union $A = U \cup B$, where $M>0$ is a constant. By first choosing $N \in \mathbb N$ sufficiently large so that $p$ is expanding on $D \setminus B$, and then choosing $M>0$ sufficiently large, the map $p$ will be strictly expanding on $D$ with respect to the metric $\mu$.

\medskip

The above construction can be carried out in each fiber $\{z = z_0\}$, for $z_0$ sufficiently close to $0$. Each neighborhood of an attracting cycle will still be mapped relatively compactly into itself by $g_{z}$ for $z$ sufficiently small. Hence we can take the union of these neighborhoods, the invariant curves through the parabolic fixed points, and the parabolic ``petals'' $B_{\rho,\eta}$ discussed earlier in this section, and map this closed forwards invariant set backwards by $F^N$ for $N\in \mathbb N$, and denote the complement in $\{|z|< \rho\}$ by $U$. By construction it follows that for $N$ sufficiently large, and by decreasing $\rho>0$ if necessary, the open and backwards invariant set $U$ does not intersect the critical set $\{\frac{\partial F}{\partial w} = 0\}$.

We denote the intersection of $U$ with $\{z = z_0\}$ by $U(z_0)$ and write $\mu_{U(z_0)}$ for the hyperbolic metric on $U(z_0)$. By the backwards invariance of $U$ we have that $U(z_0) \subset F(U(\lambda^{-1}z_0))$. Similarly to the attracting parabolic petal $B_{\rho, \eta}$ we can define the repelling parabolic petal, which is backwards invariant. Recalling the local coordinates
\begin{equation}\label{eq_gen_start_red}
F(z,\zeta)
=
\left(\lambda z, \zeta  - \zeta^{k+1} + b \zeta^{2k+1} +  \sum_{m\ge 2k+2} \beta_m (z) \zeta^m\right),
\end{equation}
we immediately see that on the repelling parabolic petals, which in these local coordinates can be defined independently of $z$, $F$ acts expandingly in the vertical direction with respect to the metrics $\mu_{B(z_0)} = |d\zeta|$. Thus, just as in the one-dimensional setting, we can first choose $N$ sufficiently large and then $M$ sufficiently large so that $F$ acts expandingly with respect to the metrics
$$
\mu(z_0) = \inf\{\mu_{U(z_0)}, M \cdot \mu_{B(z_0)}\}.
$$
That is,
$$
\mu_{z_1} (w_1; dg_{z0}\xi) > \mu_{z_0} (w_0; \xi)
$$
whenever $(z_1, w_1) = F(z_0, w_0) \in U$.

\section{Fatou components near the invariant fiber}

Recall that we are dealing with a skew-product of the form
$$
F(z,w) = (\lambda \cdot z, g_z(w)),
$$
where $\lambda$ is assumed to satisfy the Brjuno condition, and $g_z$ is a polynomial in $w$ with coefficients depending holomorphically on $z$. Let us restate and prove our main result.

\medskip

\noindent{ \bf Theorem 1.}
\emph{If $\lambda$ is Brjuno and all critical points of the polynomial $g_0$ lie in basins of attracting or parabolic cycles, then all Fatou components of $g_0$ bulge, and there is a neighborhood of the invariant fiber $\{z=0\}$ in which the only Fatou components of $F$ are the bulging Fatou components of $g_0$. In particular there are no wandering Fatou components in this neighborhood.}

\medskip

\begin{proof} In order to show that all Fatou components of $g_0$ bulge, it is sufficient to show that the periodic Fatou components of $g_0$ bulge. Every periodic Fatou of $g_0$ is either an immediate basin of an attracting periodic point, or an immediate basin of a parabolic periodic cycle. The attracting basins always bulge. If $\lambda$ is Brjuno, then the parabolic basins bulge, as was shown in Proposition \ref{localcoordinates}. In order to complete the proof of Theorem \ref{thm:main} it therefore remains to be shown that $F$ has no other Fatou components in a neighborhood of the invariant fiber.

As before we may assume that all parabolic cycles are fixed points by considering a high iterate of $F$. Recall that the local change of coordinates found in Proposition \ref{localcoordinates} near each of the parabolic fixed points gives an invariant holomorphic disk $\{w = \phi_i(z)\}$, which we refer to as a parabolic disk. Since there are only finitely many parabolic fixed points, we can find a $\delta>0$ so that the parabolic disks are all defined for $|z|< \delta$. We have obtained metrics $\mu_z$ on each of those fibers, so that the action of $F$ with respect to these metrics is strictly expansive. Moreover, given any neighborhood of the parabolic disks the metrics are \emph{uniformly} expanding outside of this neighborhood. Now note that any point whose orbit converges to a parabolic disk must lie in the bulging Fatou component corresponding to a parabolic basin of $g_0$.

Now suppose that $0<|z_0|<\delta$ and $(z_0,w_0)$ does not lie in one of the bulging Fatou components. Then the orbit $(z_n,w_n)$ remains in the neighborhood where the expanding metrics are defined, and must infinitely often visit the complement of some neighborhood of the parabolic disks. Hence for any vertical tangent vector $\xi \in T_{w_0}(\mathbb C_{z_0})$ we have that
$$
\mu_{z_n} dg_{w_{n-1}} \cdots dg_{w_0} \xi \rightarrow \infty,
$$
from which it follows that the family ${F^n}$ cannot be normal on any neighborhood of $(z_n,w_n)$. This proves that in the region $\{|z|<\delta\}$ there are no Fatou components but the bulging components.
\end{proof}

\end{document}